\newtheorem{thmnum}{}
\newcounter{quotethmcnt}
\def\equationautorefname~#1\null{(#1)}
\def\itemautorefname~#1\null{#1}
\newcommand{\mynewthm}[3][]{%
  \newaliascnt{#2}{thmnum}%
  \newtheorem{#2}[#2]{#3}%
  \aliascntresetthe{#2}%
  \newtheorem*{#2*}{#3}%
  \expandafter\newcommand\csname #2autorefname\endcsname{#3}%
  \expandafter\renewcommand\csname the#2\endcsname{\thethmnum}%
}
\newtheorem*{clm}{Claim}
\newenvironment{clmprf}{%
  \begin{proof}[Proof of claim]%
  }{\end{proof}}
\let\xxx=\frametitle
\def\frametitle#1{%
  \xxx{%
    \setbeamercolor*{math text}{use={titlelike,my math text},fg=titlelike.fg!80!my math text.fg}%
    #1}%
  \setbeamercolor{math text}{use=my math text,fg=my math text.fg}%
}
\newcommand{\beamerenv}[3]{%
\newenvironment<>{#1}%
{%
  \setbeamercolor{temp}{fg=structure.fg}%
  \setbeamercolor{structure}{fg=#2}%
  \setbeamercolor{block body}{use=structure,bg=structure.fg!5!white}%
  \begin{#3}%
}%
{\end{#3}\setbeamercolor{structure}{fg=temp.fg}}}
\newcommand{\mynewthm}[3][green!50!black]{%
  \newtheorem*{#2x}{#3}%
  \beamerenv{#2}{#1}{#2x}%
}
\newcommand{\myiffrench}[2]{#2}
\newcommand{\myiffrench}[2]{\iflanguage{french}{#1}{#2}}
\theoremstyle{plain}
\theoremstyle{definition}
\theoremstyle{remark}
\newcommand{\myenumlabel}[1]{\textnormal{(\roman{#1})}}
\newcounter{cycprfcnt}
\newcounter{cycprffirst}
\newcommand{\cycprfpreamble}[1]%
{%
  \setcounter{cycprfcnt}{1}
  \setcounter{cycprffirst}{#1}
  \setlength{\itemindent}{0.5\leftmargin}%
  \setlength{\leftmargin}{0pt}%
  \newcommand{\cpcurr}{\myenumlabel{cycprfcnt}}%
  \newcommand{\cpnext}{\addtocounter{cycprfcnt}{1}\cpcurr}%
  \newcommand{\cpnum}[1]{\setcounter{cycprfcnt}{##1}\cpcurr}%
  \newcommand{\cpfirst}{\cpnum{1}}%
  \newcommand{\impnext}{\cpcurr{} $\Longrightarrow$ \cpnext.}%
  \newcommand{\impfirst}{\cpcurr{} $\Longrightarrow$ \cpfirst.}%
  \newcommand{\eqnext}{\cpcurr{} $\Longleftrightarrow$ \cpnext.}%
  \newcommand{\impnum}[2]{\cpnum{##1}{} $\Longrightarrow$ \cpnum{##2}.}%
  \def\makelabel##1{\ifnum\value{cycprffirst}=0\hspace{-0.7\itemindent}\setcounter{cycprffirst}{1}\fi##1}%
}%
\qedhere\end{list}}%
\newenvironment{cycprf*}[1][0]%
{\begin{list}{\impnext}{\cycprfpreamble{#1}}}%
{\end{list}}%
\def\indsym#1#2{%
  \setbox0=\hbox{$\m@th#1x$}%
  \kern\wd0%
  \hbox to 0pt{\hss$\m@th#1\mid$\hbox to 0pt{$\m@th#1^{#2}$\hss}\hss}%
  \lower.9\ht0\hbox to 0pt{\hss$\m@th#1\smile$\hss}%
  \kern\wd0}
\def\nindsym#1#2{%
  \setbox0=\hbox{$\m@th#1x$}%
  \kern\wd0%
  \hbox to 0pt{\hss$\m@th#1\not$\kern1.4\wd0\hss}
  \hbox to 0pt{\hss$\m@th#1\mid$\hbox to 0pt{$\m@th#1^{#2}$\hss}\hss}%
  \lower.9\ht0\hbox to 0pt{\hss$\m@th#1\smile$\hss}%
  \kern\wd0}
\def\dotminussym#1#2{%
  \setbox0=\hbox{$\m@th#1-$}%
  \kern.5\wd0%
  \hbox to 0pt{\hss\hbox{$\m@th#1-$}\hss}%
  \raise.6\ht0\hbox to 0pt{\hss$\m@th#1.$\hss}%
  \kern.5\wd0}
\newcommand{\rest}{{\restriction}}
\DeclareMathOperator{\tp}{tp}
\DeclareMathOperator{\tS}{S}
\newcommand{\bM}{\mathbf{M}}
\begin{document}

\title{Model theoretic stability and definability of types, after A.~\textsc{Grothendieck}}

\author{Itaï \textsc{Ben Yaacov}}

\address{Itaï \textsc{Ben Yaacov} \\
  Université Claude Bernard -- Lyon 1 \\
  Institut Camille Jordan, CNRS UMR 5208 \\
  43 boulevard du 11 novembre 1918 \\
  69622 Villeurbanne Cedex \\
  France}

\urladdr{\url{http://math.univ-lyon1.fr/~begnac/}}

\thanks{Research supported by the Institut Universitaire de France and ANR contract GruPoLoCo (ANR-11-JS01-008).}
\thanks{The author wishes to thank A.~\textsc{Berenstein}, S.~\textsc{Ferri} and T.~\textsc{Tsankov} for various discussions and ideas.}

\svnInfo $Id: Grothendieck.tex 1918 2014-09-16 10:59:31Z begnac $
\thanks{\textit{Revision} {\svnInfoRevision} \textit{of} \today}

%\date{\today}
\keywords{stable formula ; order property ; definable type ; relative weak compactness}
\subjclass[2010]{03C45 ; 46E15}

\begin{abstract}
  We point out how the ``Fundamental Theorem of Stability Theory'', namely the equivalence between the ``non order property'' and definability of types, proved by Shelah in the 1970s, is in fact an immediate consequence of Grothendieck's ``Critères de compacité'' from 1952.
  The familiar forms for the defining formulae then follow using Mazur's Lemma regarding weak convergence in Banach spaces.
\end{abstract}

\maketitle

In a meeting in Kolkata in January 2013, the author asked the audience who it was, and when, to have first defined the notion of a stable formula, and to the expected answer replied that, no, it had been Grothendieck, in the fifties.
This was meant as a joke, of course -- a more exact statement would be that in Théorème 6 and Proposition 7 of Grothendieck \cite{Grothendieck:CriteresDeCompacite} there appears a condition (see \autoref{eq:GrothendieckGroup} and \autoref{eq:GrothendieckSpace} below) which can be recognised as the ``non order property'' (NOP)\footnote{Iovino \cite{Iovino:StableBanach} points out that NOP also appears in a characterisation of reflexive Banach spaces due to James \cite{James:UniformlyNonSquareBanachSpaces}.
  For a direct connection between weak almost periodicity and reflexive Banach spaces see for example Megrelishvili \cite[Theorem~4.6]{Megrelishvili:FragmentabilityAndRepresentations}.}.
It took (us) a while longer to realise that one could ask, quite seriously, who first proved the ``Fundamental Theorem of Stability Theory'', namely, the equivalence between NOP and definability of types, and the answer would essentially be the same.
(As a model theoretic result, this was first proved by Shelah \cite{Shelah:ClassificationTheory}, probably in the seventies, generalising Morley's result that in a totally transcendental theory all types are definable.)

In everything that follows, if $X$ is a topological space then $C_b(X)$ denotes the Banach space of bounded, complex-valued functions on $X$, equipped with the supremum norm.
A subset $A \subseteq C_b(X)$ is \emph{relatively weakly compact} if it has compact closure in the weak topology on $C_b(X)$.

\begin{fct}[Grothendieck {\cite[Proposition~7]{Grothendieck:CriteresDeCompacite}}]
  \label{fct:GrothendieckGroup}
  Let $G$ be a topological group (in fact, it suffices that the product be separately continuous).
  Then the following are equivalent for a function $f \in C_b(G)$:
  \begin{enumerate}
  \item The function $f$ is \emph{weakly almost periodic}, i.e., the orbit $G \cdot f \subseteq C_b(G)$, say under right translation, is relatively weakly compact.
  \item Whenever $g_n,h_n \in G$ form two sequences we have
    \begin{gather}
      \label{eq:GrothendieckGroup}
      \lim_n \lim_m f(g_n h_m) = \lim_m \lim_n f(g_n h_m),
    \end{gather}
    as soon as both limits exist.
  \end{enumerate}
\end{fct}

This has been first brought to the author's attention by A.~\textsc{Berenstein} (see \cite{BenYaacov-Berenstein-Ferri:ReflexiveRepresentability}).
The first reference to \autoref{eq:GrothendieckGroup} as ``stability'' is probably the Krivine-Maurey stability \cite{Krivine-Maurey:EspacesDeBanachStables}, where $G$ is the additive group of a Banach space and $f(x) = \|x\|$ (or rather, $f(x) = \min \bigl( \|x\|, M \bigr)$ for some large $M$, since $f$ should be bounded -- in any case, Krivine and Maurey make no reference to Grothendieck's result).
As it happens, \autoref{fct:GrothendieckGroup} is a mere corollary of the following:

\begin{fct}[Grothendieck {\cite[Théorème~6]{Grothendieck:CriteresDeCompacite}}]
  \label{fct:GrothendieckSpace}
  Let $X$ be an arbitrary topological space, $X_0 \subseteq X$ a dense subset.
  Then the following are equivalent for a subset $A \subseteq C_b(X)$:
  \begin{enumerate}
  \item The set $A$ is relatively weakly compact in $C_b(X)$.
  \item The set $A$ is bounded, and whenever $f_n \in A$ and $x_n \in X_0$ form two sequences we have
    \begin{gather}
      \label{eq:GrothendieckSpace}
      \lim_n \lim_m f_n(x_m) = \lim_m \lim_n f_n(x_m),
    \end{gather}
    as soon as both limits exist.
  \end{enumerate}
\end{fct}

Our aim in this note is to point out how, modulo standard translations between syntactic and topological formulations, the Fundamental Theorem is an immediate corollary of \autoref{fct:GrothendieckSpace}.
In fact, we prove a version of the Fundamental Theorem relative to a single model, as in Krivine-Maurey stability, which in turn implies the usual version.
Our argument adapts a similar argument given in \cite{BenYaacov-Tsankov:WAP} in the context of $\aleph_0$-categorical structures.

Let us first recall a few definitions and facts regarding local types in standard first order logic.
We fix a formula $\varphi(x,y)$, where $x$ and $y$ are disjoint tuples of variables, say singletons, for simplicity.
If $\bM$ is a structure and $a \in \bM' \succeq \bM$, we define the \emph{$\varphi$-type} $\tp_\varphi(a/M)$ as the collection of all instances $\varphi(x,b)$, $b \in M$, such that $\varphi(a,b)$ holds, and let $\tS_\varphi(M)$ denote the space of all $\varphi$-types (we shall only consider $\varphi$-types over models).
We equip $\tS_\varphi(M)$ with the obvious topology, rendering it a compact, totally disconnected space.
The clopen subsets of $\tS_\varphi(M)$ are exactly those defined by Boolean combinations of instances of $\varphi$ over $M$ -- we call such a Boolean combination a \emph{$\varphi$-formula over $M$}.
We say that a formula $\psi(y)$ over $M$ \emph{defines} a $\varphi$-type $p \in \tS_\varphi(M)$ if for every $b \in M$ we have $\varphi(x,b) \in p(x)$ if and only if $\vDash \psi(b)$.

In the setting of continuous logic (see \cite{BenYaacov-Usvyatsov:CFO} or \cite{BenYaacov-Berenstein-Henson-Usvyatsov:NewtonMS}), the situation is essentially identical, \textit{mutatis mutandis}.
In fact, identifying \textit{True} with the value zero and \textit{False} with one, we can, and will, view the classical case described above as a special case of the following.
Recall first that a \emph{definable predicate} over $M$ is a continuous combination of (possibly infinitely many) formulae over $M$ (formulae being, by definition, finite syntactic objects), or equivalently, a uniform limit of formulae over $M$, or yet equivalently, a continuous function on $\tS_n(M)$ where $n$ is the number of arguments.
For all semantic intents and purposes definable predicates are indistinguishable from formulae, and every $\{0,1\}$-valued definable predicate is in fact a formula.
We define $p = \tp_\varphi(a/M)$ as the function which associates to each instance $\varphi(x,b)$, $b \in M$, the value $\varphi(a,b)$, which will then be denoted by $\varphi(p,b)$, or $\varphi^b(p)$.
We equip $\tS_\varphi(M)$ with the least topology in which all functions $\varphi^b$ (for $b \in M$) are continuous.
It is compact and Hausdorff, and every continuous function on $\tS_\varphi(M)$ can be expressed as a continuous combination of (possibly infinitely many, but at most countably many) functions of the form $\varphi^b$, or equivalently, as a uniform limit of finite continuous combinations -- such a definable predicate will be called a \emph{$\varphi$-predicate over $M$}.
A definable predicate $\psi(y)$ over $M$ defines $p \in \tS_\varphi(M)$ if $\varphi_p(b) = \psi(b)$ for all $b \in M$.

Finally, as per \cite[Appendix~B]{BenYaacov-Usvyatsov:CFO}, we say that $\varphi(x,y)$ is \emph{stable in a structure $\bM$} if whenever $a_n,b_n \in M$ form two sequences we have
\begin{gather}
  \label{eq:Stable}
  \lim_n \lim_m \varphi(a_n,b_m) = \lim_m \lim_n \varphi(a_n,b_m),
\end{gather}
as soon as both limits exist.
We say that $\varphi$ is \emph{stable in a theory $T$} if it is stable in every model of $T$.
We leave it to the reader to check that this is merely a rephrasing of the familiar NOP.

We first prove the Fundamental Theorem for stability inside a model.

\begin{thm}
  \label{thm:FundamentalTheoremModelWeak}
  Let $\varphi(x,y)$ be a formula stable in a structure $\bM$.
  Then every $p \in \tS_\varphi(M)$ is definable by a (unique) $\tilde \varphi$-predicate $\psi(y)$ over $M$, where $\tilde \varphi(y,x) = \varphi(x,y)$ (in the case of classical logic, a $\tilde \varphi$-formula).
\end{thm}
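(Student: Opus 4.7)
The plan is to apply Grothendieck's criterion (\autoref{fct:GrothendieckSpace}) to the compact Hausdorff space $X = \tS_{\tilde\varphi}(M)$ together with its dense subset $X_0 \subseteq X$ consisting of the $\tilde\varphi$-types over $M$ realized in $\bM$ itself (density being the standard consequence of elementarity, in the classical or continuous setting alike). For each $a \in M$ I will use the continuous function $\tilde\varphi^a \in C_b(X)$ already introduced in the excerpt, which satisfies $\tilde\varphi^a(\tp_{\tilde\varphi}(b/M)) = \varphi(a,b)$ for any $b$ realizing that type. Setting $A = \{\tilde\varphi^a : a \in M\}$, the stability hypothesis \autoref{eq:Stable} for $\varphi$, read off on sequences $a_n,b_n \in M$, says exactly that the two iterated limits of $\tilde\varphi^{a_n}(\tp_{\tilde\varphi}(b_m/M)) = \varphi(a_n,b_m)$ agree whenever both exist; in other words, \autoref{eq:GrothendieckSpace} holds for $A$ on $X_0$. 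Since $A$ is norm-bounded by a bound on $\varphi$, Grothendieck's theorem delivers the relative weak compactness of $A$ in $C_b(X)$.

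Given $p \in \tS_\varphi(M)$ realized by some $a^* \in \bM' \succeq \bM$, I use density of realized $\varphi$-types in $\tS_\varphi(M)$ to pick a net $(a_\alpha) \subseteq M$ with $\tp_\varphi(a_\alpha/M) \to p$, that is, $\varphi(a_\alpha,b) \to \varphi(p,b)$ for every $b \in M$. Relative weak compactness furnishes a subnet along which $\tilde\varphi^{a_{\alpha_\beta}} \to \psi$ weakly in $C_b(X)$ for some $\psi$; since evaluation at any point of $X$ is a continuous linear functional on $C_b(X)$, weak convergence entails pointwise convergence. Restricting to $\xi = \tp_{\tilde\varphi}(b/M) \in X_0$ gives
\[
\psi(\tp_{\tilde\varphi}(b/M)) \;=\; \lim_\beta \varphi(a_{\alpha_\beta},b) \;=\; \varphi(p,b)
\]
for every $b \in M$, so $\psi$ — which, being continuous on $\tS_{\tilde\varphi}(M)$, is automatically a $\tilde\varphi$-predicate over $M$ — defines $p$.

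Uniqueness is immediate because continuous functions on $X$ are determined by their values on the dense set $X_0$. In the classical case, $\psi$ takes values in $\{0,1\}$ on $X_0$ (each $\varphi(p,b)$ is a truth value); by continuity and density, this forces $\psi(X) \subseteq \{0,1\}$, so $\psi$ corresponds to a clopen in $\tS_{\tilde\varphi}(M)$ and is therefore a $\tilde\varphi$-formula. I anticipate no genuine obstacle: the stability condition on $\varphi$ is verbatim the Grothendieck double-limit condition on the canonical family $A$, and the rest is the extraction of a weak limit of an approximating net in $A$. The only point that requires attention is the bookkeeping between $\varphi$ and $\tilde\varphi$ — the defining predicate $\psi$ necessarily lives on the $\tilde\varphi$-type space because that is where the evaluation maps $a \mapsto \tilde\varphi^a$ naturally sit.
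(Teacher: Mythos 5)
Your proposal is correct and follows essentially the same route as the paper: identify the stability condition with Grothendieck's double-limit criterion for $A = \{\tilde\varphi^a : a \in M\}$ on $X = \tS_{\tilde\varphi}(M)$ with the realised types as dense subset, extract a weak limit $\psi$ of (a subnet of) an approximating net of realised types, and read off the definition pointwise. The extra details you supply (weak convergence implying pointwise convergence via evaluation functionals, and the $\{0,1\}$-valued case yielding a genuine $\tilde\varphi$-formula) are correct and consistent with the paper's argument.
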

\begin{proof}
  Let $X = \tS_{\tilde \varphi}(M)$ and let $X_0 \subseteq X$ be the collection of those types realised in $M$, which is dense in $X$.
  Since $X$ is compact we have $C_b(X) = C(X)$.
  For $a \in M$ let $\varphi_a = \tilde \varphi^a$, so $A = \{\varphi_a : a \in M\} \subseteq C(X)$ is bounded (since every formula is).
  Thus, by \autoref{fct:GrothendieckSpace}, $\varphi$ is stable in $\bM$ if and only if $A$ is relatively weakly compact.

  Assume now that $\varphi$ is indeed stable in $\bM$, let $p \in \tS_\varphi(M)$, and let $a_i \in M$ form a net such that $\tp_\varphi(a_i/M) \rightarrow p$.
  By \autoref{fct:GrothendieckSpace} we may assume that $\varphi_{a_i}$ converges weakly to some $\psi \in C(X)$.
  Then $\psi$ is a $\tilde \varphi$-predicate over $M$, and for $b \in M$ we have
  \begin{gather*}
    \varphi(p,b) = \lim \varphi(a_i,b) = \psi(b),
  \end{gather*}
  as desired.
  The uniqueness of $\psi$ is by density of the realised types.
\end{proof}

With small variations, this appears in Pillay \cite[Corollary~2.3]{Pillay:IntroductionToStability} for classical logic, or in \cite[Theorem~B.4]{BenYaacov-Usvyatsov:CFO} for continuous logic (the latter also asserts that the defining formula is an \emph{increasing} continuous combination of instances of $\tilde \varphi$, which follows from \autoref{thm:FundamentalTheoremModelWeak} modulo Mazur's Lemma, see \autoref{cor:DefiningFormula}).
The Fundamental Theorem follows:

\begin{cor}[Fundamental Theorem of Stability]
  \label{cor:FundamentalTheoremTheory}
  Let $\varphi(x,y)$ be a formula and $T$ a theory.
  Then the following are equivalent.
  \begin{enumerate}
  \item The formula $\varphi$ is stable in $T$.
  \item For every model $\bM \vDash T$, every $\varphi$-type over $M$ is definable by a $\tilde \varphi$-predicate over $M$.
  \item For every model $\bM \vDash T$, every $\varphi$-type over $M$ is definable over $M$ (by some definable predicate).
  \item Let $\bM \vDash T$, and let $d$ denote the metric of uniform convergence on $\tS_\varphi(M)$ (i.e., $d(p,q) = \|\varphi(p,\cdot)-\varphi(q,\cdot)\|_\infty$).
    Then the density character of $\bigl( \tS_\varphi(M), d \bigr)$ is at most the density character of $M$ plus $|T|$ (in the classical settings all the distances are discrete and the density character is the same as the cardinal).
  \item There exists a cardinal $\kappa \geq |T|$ (in fact, any $\kappa = (\kappa_0 + |T|)^{\aleph_0}$ will do, and in the classical setting, any $\kappa \geq |T|$ will do) such that if $\bM \vDash T$, $|M| \leq \kappa$ then $|\tS_\varphi(M)| \leq \kappa$ as well.
  \end{enumerate}
\end{cor}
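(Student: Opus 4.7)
The plan is to establish the cyclic implications (i) $\Rightarrow$ (ii) $\Rightarrow$ (iii) $\Rightarrow$ (iv) $\Rightarrow$ (v) $\Rightarrow$ (i). Three of these are essentially automatic: (i) $\Rightarrow$ (ii) is obtained by applying \autoref{thm:FundamentalTheoremModelWeak} to each $\bM \vDash T$ separately, and (ii) $\Rightarrow$ (iii) is immediate since every $\tilde\varphi$-predicate is a definable predicate.

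For (iii) $\Rightarrow$ (iv), I would observe that the assignment $p \mapsto \psi_p$ sending each $\varphi$-type over $M$ to its defining predicate is a well-defined isometric embedding of $(\tS_\varphi(M), d)$ into the space of definable predicates over $M$ equipped with the uniform norm. Indeed, $d(p,q) = \sup_{b \in M} |\varphi(p,b) - \varphi(q,b)| = \|\psi_p - \psi_q\|_\infty$, the last equality following because $\psi_p - \psi_q$ is continuous on its natural compact domain, in which the realised types are dense. The target space is the uniform closure of the formulae with parameters from $M$; since it suffices to take parameters from a dense subset of $M$, its density character is at most the density character of $M$ plus $|T|$, which transfers to $\tS_\varphi(M)$ via the isometric embedding.

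For (iv) $\Rightarrow$ (v), I would set $\kappa = (\kappa_0 + |T|)^{\aleph_0}$ and use that any complete bounded metric space of density character at most $\mu$ has cardinality at most $\mu^{\aleph_0}$, since each point is determined by a Cauchy sequence from a fixed dense subset. Assuming $|M| \leq \kappa$, (iv) gives density character of $\tS_\varphi(M)$ at most $\kappa$, hence $|\tS_\varphi(M)| \leq \kappa^{\aleph_0} = \kappa$. In the classical case the metric $d$ is discrete, so density character coincides with cardinality and any $\kappa \geq |T|$ suffices, with no need for the $\aleph_0$-closure.

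The main obstacle is (v) $\Rightarrow$ (i), the standard counting contrapositive. Assuming $\varphi$ is unstable in some $\bM \vDash T$, \autoref{eq:Stable} fails for suitable sequences $(a_n), (b_m)$, exhibiting ``half-graph'' behaviour with a positive gap between the two iterated limits. By Ramsey plus compactness I would extract, for any prescribed $\lambda$, a $\varphi$-order-indiscernible sequence $(a_i, b_i)_{i < \lambda}$ inside some $\bN \vDash T$ with $|N| = \lambda$, on which $\varphi(a_i, b_j)$ takes one value for $i \leq j$ and a separated value otherwise. Realising the $2^\lambda$ distinct cuts of $\lambda$ in an elementary extension then yields more than $\kappa$ pairwise $d$-separated $\varphi$-types over $N$ whenever $\lambda \geq \kappa$, contradicting (v). The delicate point here is only the Ramsey extraction in the continuous setting, where the gap in \autoref{eq:Stable} must be preserved; this is handled by the standard approximate indiscernibility argument.
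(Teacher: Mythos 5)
Your overall architecture is exactly the paper's: the paper proves (i) $\Rightarrow$ (ii) by \autoref{thm:FundamentalTheoremModelWeak} and dismisses the remaining chain (ii) $\Rightarrow$ (iii) $\Rightarrow \ldots \Rightarrow$ (i) as ``straightforward\ldots counting arguments''. Your fillings-in of (i) $\Rightarrow$ (ii) $\Rightarrow$ (iii) $\Rightarrow$ (iv) $\Rightarrow$ (v) are correct: in particular the isometric-embedding argument for (iii) $\Rightarrow$ (iv) (using density of realised types to identify $d(p,q)$ with $\|\psi_p-\psi_q\|_\infty$ and the uniform dependence of formulae on parameters to bound the density character of the space of definable predicates) and the $\mu^{\aleph_0}$ bound for (iv) $\Rightarrow$ (v) are exactly what is needed; completeness and boundedness are not actually required for the latter, only that every point is a limit of a sequence from a dense set.

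There is, however, a genuine slip in (v) $\Rightarrow$ (i). You index the extracted indiscernible sequence by the \emph{ordinal} $\lambda$ and then speak of ``the $2^\lambda$ distinct cuts of $\lambda$''. The ordinal $\lambda$ has only $\lambda+1$ cuts (its initial segments), so realising them produces at most $\kappa$ types over a model of size $\kappa$ --- no contradiction with (v). Moreover you cannot take $\lambda>\kappa$, since the model $N$ must contain the $b_i$'s and hence would itself have size $>\kappa$, in which case (v) says nothing about it. The standard repair: by compactness, re-index the order-indiscernible (order the pairs witnessing the gap) by an arbitrary linear order $I$; choose $I$ of cardinality at most $\kappa$ but with more than $\kappa$ Dedekind cuts (e.g.\ the tree $2^{<\mu}$ ordered lexicographically, where $\mu$ is least with $2^\mu>\kappa$; then $|2^{<\mu}|\le\kappa$ while its completion has $2^\mu>\kappa$ points). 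Take $N\vDash T$ of size $\le\kappa$ containing $\{b_i : i\in I\}$ (possible since $\kappa\ge|T|$). Each cut of $I$ is realised by some $a$ in an elementary extension, and two types coming from distinct cuts differ by at least the gap $s-r$ on some $b_i$, so they are $d$-separated; this yields more than $\kappa$ types over $N$, contradicting (v). With this correction your argument is complete and coincides with the intended ``elementary counting'' route of the paper.
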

\begin{proof}
  The chain of implications (ii) $\Longrightarrow$ (iii) $\Longrightarrow \ldots \Longrightarrow$ (i) is straightforward and only requires elementary model theoretic methods and counting arguments.
  The most ``involved'' implication is (i) $\Longrightarrow$ (ii), which is by \autoref{thm:FundamentalTheoremModelWeak}.
\end{proof}

The Banach space formalism also allows us to obtain slight improvements quite easily.
First, regarding the case of stability in a single structure, we can improve \autoref{thm:FundamentalTheoremModelWeak} as follows.

\begin{thm}
  \label{thm:FundamentalTheoremModelStrong}
  Let $\varphi(x,y)$ be a formula and $\bM$ a structure, and for $a \in M$ let $\varphi_a = \tilde \varphi^a \colon q \mapsto \varphi(a,q)$.
  Then the following are equivalent.
  \begin{enumerate}
  \item \label{item:FundamentalTheoremModelStable}
    The formula $\varphi$ is stable in $\bM$.
  \item \label{item:FundamentalTheoremModelEmbedding}
    Every $p \in \tS_\varphi(M)$ is definable by a $\tilde \varphi$-predicate $\psi_p$ over $M$, and the map $p \mapsto \psi_p$ is a homeomorphic embedding of $\tS_\varphi(M)$ in the weak topology on $C\bigl( \tS_{\tilde \varphi}(M) \bigr)$.
  \item \label{item:FundamentalTheoremModelDefinability}
    If $p \in \tS_\varphi(M)$ is an accumulation point of a sequence $\bigl\{ \tp_\varphi(a_n/M) \bigr\}$ then there exists a sub-sequence $a_{n_k}$ such that $\varphi_{a_{n_k}}$ converges point-wise on $\tS_{\tilde \varphi}(M)$ to a definition of $p$.
  \end{enumerate}
  Moreover, in this case every $p \in \tS_\varphi(M)$ is the limit of a sequence of realised types.
\end{thm}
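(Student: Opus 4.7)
The plan is to go around the cycle \ref{item:FundamentalTheoremModelStable} $\Rightarrow$ \ref{item:FundamentalTheoremModelEmbedding} $\Rightarrow$ \ref{item:FundamentalTheoremModelDefinability} $\Rightarrow$ \ref{item:FundamentalTheoremModelStable}, using \autoref{fct:GrothendieckSpace} to translate between stability and relative weak compactness and the Eberlein--Šmulian theorem to sequentialise. Keeping the notation $X = \tS_{\tilde \varphi}(M)$, $X_0 \subseteq X$ the dense set of realised types, and $A = \{\varphi_a : a \in M\} \subseteq C(X)$ from the proof of \autoref{thm:FundamentalTheoremModelWeak}, \ref{item:FundamentalTheoremModelStable} is by Grothendieck equivalent to $A$ being relatively weakly compact in $C(X)$.

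Under that hypothesis, \autoref{thm:FundamentalTheoremModelWeak} furnishes for each $p \in \tS_\varphi(M)$ a unique defining $\tilde \varphi$-predicate $\psi_p$, which lies in the weakly compact set $\overline{A}^w$. The map $\Psi \colon p \mapsto \psi_p$ is injective since $\psi_p(b) = \varphi(p,b)$ for $b \in M$ recovers $p$. For continuity into the weak topology, if $p_i \to p$ in $\tS_\varphi(M)$, any weak sub-net limit $\psi'$ of $\{\psi_{p_i}\}$ inside $\overline{A}^w$ satisfies $\psi'(b) = \lim \varphi^b(p_i) = \varphi^b(p) = \psi_p(b)$ for every $b \in M$, so $\psi' = \psi_p$ by continuity and density of $X_0$; weak compactness then forces $\psi_{p_i} \to \psi_p$ weakly. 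A continuous bijection from a compact space into a Hausdorff one is a homeomorphism onto its image, giving \ref{item:FundamentalTheoremModelEmbedding}.

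For \ref{item:FundamentalTheoremModelEmbedding} $\Rightarrow$ \ref{item:FundamentalTheoremModelDefinability}, the image $\overline{A}^w$ is a weakly compact subset of a Banach space, hence angelic by Eberlein--Šmulian; if $p$ is an accumulation point of $\{\tp_\varphi(a_n/M)\}$ then $\psi_p$ is a weak accumulation point of $\{\varphi_{a_n}\}$, so angelicity yields a subsequence $\varphi_{a_{n_k}} \to \psi_p$ weakly, which is in particular pointwise on $X$ since point evaluations are continuous linear functionals. For \ref{item:FundamentalTheoremModelDefinability} $\Rightarrow$ \ref{item:FundamentalTheoremModelStable}, given any sequence $\{\varphi_{a_n}\} \subseteq A$, compactness of $\tS_\varphi(M)$ supplies an accumulation point $p$ of $\{\tp_\varphi(a_n/M)\}$, and the hypothesis produces a subsequence $\varphi_{a_{n_k}}$ converging pointwise on $X$ to a continuous $\psi_p$. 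Since $A$ is uniformly bounded and $X$ is compact, dominated convergence against any Radon measure on $X$ (which by Riesz exhausts $C(X)^*$) upgrades this to weak convergence; hence $A$ is relatively weakly sequentially compact, therefore relatively weakly compact by Eberlein--Šmulian, and \ref{item:FundamentalTheoremModelStable} follows via \autoref{fct:GrothendieckSpace}.

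The ``moreover'' is a final application of angelicity: $A$ is weakly dense in $\overline{A}^w = \Psi\bigl( \tS_\varphi(M) \bigr)$, so each $\psi_p$ is the weak limit of a sequence $\varphi_{a_n}$ with $a_n \in M$, and pulling back through the homeomorphism $\Psi^{-1}$ gives $\tp_\varphi(a_n/M) \to p$. The main obstacle is essentially bookkeeping: juggling the type-space topology on $\tS_\varphi(M)$, the pointwise topology on $C(X)$, and the weak topology, and correctly invoking angelicity to pass between nets and sequences. Once that is in place the whole argument is \autoref{thm:FundamentalTheoremModelWeak} plus Eberlein--Šmulian.
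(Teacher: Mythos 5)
Your argument is correct, and its skeleton is the paper's: Grothendieck's criterion identifies stability of $\varphi$ in $\bM$ with relative weak compactness of $A = \{\varphi_a : a \in M\}$ in $C\bigl(\tS_{\tilde\varphi}(M)\bigr)$, and Eberlein--Šmulian mediates between nets and sequences. The genuine divergence is in how \ref{item:FundamentalTheoremModelDefinability} is obtained. You derive it from \ref{item:FundamentalTheoremModelEmbedding} by invoking the angelicity of weakly compact subsets of Banach spaces (every weak cluster point of a sequence in a relatively weakly compact set is the weak limit of a subsequence), and you reuse the same property for the ``moreover'' clause. The paper instead proves \ref{item:FundamentalTheoremModelStable} $\Longrightarrow$ \ref{item:FundamentalTheoremModelDefinability} by Löwenheim--Skolem: pass to a separable $\bM_0 \preceq \bM$ containing the $a_n$ (and, for the moreover part, the parameters of $\psi$), where the metrizability of the type space lets one extract the subsequence, then push the pointwise convergence from $\tS_{\tilde\varphi}(M_0)$ up to $\tS_{\tilde\varphi}(M)$. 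Your route is shorter and purely functional-analytic, but it consumes a stronger (if entirely standard) black box than the bare sequential form of Eberlein--Šmulian cited in the paper; the paper's route needs only ``relatively weakly sequentially compact implies relatively weakly compact'' and compensates with elementary model theory. Two details worth writing out: surjectivity of $\Psi$ onto the weak closure of $A$ (any weak limit of a net $\varphi_{a_i}$ equals $\psi_p$ after passing to a subnet along which $\tp_\varphi(a_i/M) \to p$), which both your \ref{item:FundamentalTheoremModelEmbedding} $\Longrightarrow$ \ref{item:FundamentalTheoremModelDefinability} step and your moreover argument quietly use; and, in \ref{item:FundamentalTheoremModelDefinability} $\Longrightarrow$ \ref{item:FundamentalTheoremModelStable}, the dominated-convergence upgrade requires the pointwise limit to lie in $C\bigl(\tS_{\tilde\varphi}(M)\bigr)$, which holds because a definition here means a $\tilde\varphi$-predicate and is therefore continuous.
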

\begin{proof}
  We continue with the notations of the proof of \autoref{thm:FundamentalTheoremModelWeak}.
  \begin{cycprf*}[1]
  \item[\eqnext]
    If such a homeomorphism exists then $A$ is relatively weakly compact, and $\varphi$ is stable.
    For the converse, by the proof of \autoref{thm:FundamentalTheoremModelWeak} the map sending $p \mapsto \psi_p$ is a bijection with the weak closure of $A$ (if $\varphi_{a_i}$ form a weakly convergent net then, possibly passing to a sub-net, we may assume that $\tp_\varphi(a_i/M)$ converge), which is weakly compact.
    Since its inverse is clearly continuous, it is a homeomorphism.
  \item[\impnum{1}{3}]
    Let $p \in \tS_\varphi(M)$ be defined by $\psi$.
    Since we are only interested in a single formula, we may assume that the language is countable, and find a separable (or countable) $\bM_0 \preceq \bM$ containing the sequence $\{a_n\}$.
    Let $Y = \tS_{\tilde \varphi}(M_0)$, so we have $X \twoheadrightarrow Y$ and $\psi \in C(Y) \subseteq C(X)$ also defines the restriction $p_0 = p\rest_{M_0}$.
    Since $M_0$ is separable, there exists a sub-sequence $a_{n_k}$ such that $\tp_\varphi(a_{n_k}/M_0) \rightarrow p_0$.
    Since $\varphi$ is stable in $M_0$, $\varphi_{a_{n_k}} \rightarrow \psi$ point-wise on $Y$ and therefore on $X$.
  \item[\impfirst]
    By the Eberlein-Šmulian Theorem (see Whitley \cite{Whitley:EberleinSmulianProof}), and since point-wise convergence of a bounded sequence in $C(X)$ implies weak convergence (since $X$ is compact, by the Dominated Convergence Theorem), $A$ is relatively weakly compact, so $\varphi$ is stable in $\bM$.
  \end{cycprf*}
  For the moreover part, just argue as above, taking $\bM_0$ to contain the (countably many) parameters needed for the definition $\psi$, and taking $a_n$ to be any sequence in $M_0$ such that $\tp_\varphi(a_n/M_0) \rightarrow p_0$.
\end{proof}

The second point is with respect to the form of the defining $\tilde \varphi$-predicate, and in particular uniform definability when the formula is stable in the theory.

\begin{fct}[Mazur's Lemma]
  \label{fct:MazurLemma}
  Let $E$ be a Banach space, and let $A \subseteq E$.
  Then the weak closure of $A$ is contained in the closure of the convex hull of $A$.
\end{fct}
\begin{proof}
  Since a closed convex set is weakly closed (Hahn-Banach Theorem, see Brezis \cite{Brezis:AnalyseFonctionnelle}).
\end{proof}

\begin{cor}
  \label{cor:DefiningFormula}
  Let $\varphi(x,y)$ be a formula.
  \begin{enumerate}
  \item If $\varphi$ is stable in a structure $\bM$ then the definition of a type $p \in \tS_\varphi(M)$ can be written as a uniform limit of formulae of the form $\frac{1}{n} \sum_{i<n} \varphi(a_i,y)$, where $a_i \in M$.
  \item If $\varphi$ is $\{0,1\}$-valued, as in classical logic, the definition can be written as a single ``majority rule'' combination of instances $\varphi(a_i,y)$.
  \item If $\varphi$ is stable in a theory $T$, this can be done uniformly for all $\varphi$-types over models (i.e., with the rate of uniform convergence, or number of instances of which a majority is required, fixed).
  \end{enumerate}
\end{cor}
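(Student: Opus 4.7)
For (i), the plan is to combine \autoref{thm:FundamentalTheoremModelWeak} with Mazur's Lemma. By (the proof of) \autoref{thm:FundamentalTheoremModelWeak} applied inside $\bM$, the defining predicate $\psi_p \in C(\tS_{\tilde\varphi}(M))$ is a weak limit of functions $\varphi_{a_i}$ for a net with $\tp_\varphi(a_i/M) \to p$; in particular $\psi_p$ lies in the weak closure of $A = \{\varphi_a : a \in M\}$. \autoref{fct:MazurLemma} then places $\psi_p$ in the norm closure of the convex hull $\co(A)$, so that $\psi_p$ is a uniform limit of convex combinations $\sum_i t_i\,\varphi_{a_i}$ with $t_i \geq 0$, $\sum_i t_i = 1$, and $a_i \in M$. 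The stated form follows by a rational approximation: given $\epsilon > 0$, one approximates each $t_i$ by $k_i/n$ with a common denominator $n$ and then repeats each $a_i$ exactly $k_i$ times, producing a uniform average $\frac{1}{n}\sum_{j<n}\varphi(a'_j, y)$ whose distance from the original convex combination is $O(\|\varphi\|_\infty/n)$.

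For (ii), when $\varphi$ (and hence $\psi_p$) is $\{0,1\}$-valued, one uses (i) to produce $\chi(y) = \frac{1}{n}\sum_{i<n}\varphi(a_i, y)$ with $\|\chi - \psi_p\|_\infty < 1/2$. Thresholding then recovers $\psi_p$ exactly: $\psi_p(b) = 1$ if and only if $\chi(b) \geq 1/2$ if and only if at least $\lceil n/2 \rceil$ of the instances $\varphi(a_i, b)$ hold --- a single ``majority rule'' Boolean combination over the chosen $a_i$.

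For (iii), the genuine difficulty is that the $n$ produced in (i) may \emph{a priori} depend on both $M$ and $p$, and one must extract a bound uniform over all $\bM \vDash T$. I would proceed by a model-theoretic compactness argument. Suppose for contradiction that for some $\epsilon > 0$ and every $n$ there is a counterexample $(\bM_n, p_n)$, with $p_n$ realized by $c_n$ in an elementary extension $\bM'_n \succeq \bM_n$. Form the continuous-logic ultraproduct $\bM^* = \prod_n \bM_n/\cU$ (a model of $T$), set $c^* = [c_n] \in \prod_n \bM'_n/\cU$, and $p^* = \tp_\varphi(c^*/M^*)$. Stability in $T$ passes to $\bM^*$, so by (i) there exist $k$ and $a^*_0,\dots,a^*_{k-1} \in M^*$ with $\sup_{b \in M^*}|\frac{1}{k}\sum_i \varphi(a^*_i, b) - \varphi(c^*, b)| < \epsilon/3$. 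For $n$ larger than some $N_0 = N_0(k, \|\varphi\|_\infty, \epsilon)$, padding the $k$-tuple by repetition yields an $n$-parameter uniform average within $\epsilon/2$ of $\psi_{p^*}$; lifting the $a^*_i$ to representatives $a^n_i \in M_n$ and applying Łoś's theorem, the same estimate holds for $\psi_{p_n}$ over $M_n$ for $\cU$-many $n$, contradicting the counterexample hypothesis for any such $n \geq N_0$. The principal obstacle is precisely this uniformity step in (iii): Mazur's Lemma is intrinsically a per-model existence statement, and some compactness or ultraproduct argument appears essential to lift it to the level of the theory.
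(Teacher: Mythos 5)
Your proof is correct and follows essentially the same route as the paper: Mazur's Lemma plus rational approximation of the convex coefficients for (i), thresholding at $\half$ for (ii), and a compactness argument for (iii). The only cosmetic difference is in (iii), where you implement the compactness step via an ultraproduct of the pairs $(\bM_n', \bM_n)$ rather than the paper's device of adding a predicate $P$ for the distance to an elementary substructure and applying compactness in the expanded language; these are interchangeable, and your transfer of the estimate $\sup_{b \in M^*} \bigl| \frac{1}{k}\sum_i \varphi(a^*_i,b) - \varphi(c^*,b) \bigr| < \epsilon/3$ back to $\cU$-many indices is legitimate precisely because every element of $M^* = \prod_n \bM_n / \cU$ is the class of a sequence $(b_n)$ with $b_n \in M_n$ (one only needs $\cU$ nonprincipal so that the large set meets $[N_0,\infty)$).
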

\begin{proof}
  The first item is by Mazur's Lemma, and implies the second.
  For the third item, add a new unary predicate $P$.
  Then it is expressible in first order continuous logic that $P$ is the distance to an elementary sub-structure, and a standard compactness argument yields that if $\varphi$-types realised in models of $T$ over elementary sub-models were not uniformly definable in this fashion, one would not be definable at all, and we are done.
\end{proof}

\providecommand{\bysame}{\leavevmode\hbox to3em{\hrulefill}\thinspace}

\end{document}